\documentclass{article}
\usepackage[utf8]{inputenc}
\usepackage{amsmath,amssymb,amsthm}
\usepackage{authblk}
\usepackage{hyperref}
\usepackage{url}
\usepackage{todonotes}

\usepackage{fourier}

\newcommand{\ev}{\mathbb{E}}
\newcommand{\prob}{\mathbb{P}}

\newtheorem{theorem}{Theorem}[section]
\newtheorem{lemma}[theorem]{Lemma}

\title{On the number of finite additive 2-bases}
\author{Stefan Weltge}
\author{Konrad Zyhalko}
\affil{\small \it Technical University of Munich, Germany}
\date{}

\begin{document}
\maketitle

\begin{abstract}
    The number of finite additive 2-bases is known to grow exponentially. While this fact has been established by Marzuola and Miller~(2010) using complex analytic techniques embedded in the study of numerical sets, we provide a direct, short proof using elementary probabilistic arguments.
\end{abstract}

\section{Introduction}

For a non-negative integer $n$, let $[n]_0 := \{0,1,\dots,n\}$.
An \emph{additive 2-base} for $n$ is a subset $X \subseteq [n]_0$ such that $[n]_0 \subseteq X + X$, where $X + X = \{x + y : x,y \in X\}$.
The study of additive 2-bases is a classical topic in additive number theory, initiated by Rohrbach~\cite{Rohrbach} whose work focused on the smallest cardinality of an additive 2-base for $n$.
The minimal size of an additive 2-base is known to grow asymptotically as $\sqrt{n}$, and more precise bounds have been established over the years by various authors (see \cite{HammererHofmeister}, \cite{Mrose}, \cite{Moser}, \cite{GunturkNathanson}, \cite{Yu}).

In this short note, we are interested in the number of additive 2-bases for $n$.
To this end, let $\Gamma(n)$ be the family of all additive 2-bases for $n$.
The values $|\Gamma(n)|$ for $n = 0,\dots,65$ were calculated by Martin Fuller and are indexed in OEIS~\cite[A066062]{oeis}.
The principal theoretical work yielding a strong lower bound on $|\Gamma(n)|$ is due to Marzuola and Miller~\cite{MarzuolaMiller}, who primarily study ``numerical sets with no small atoms'' and show that this notion is intimately related to additive $2$-bases.
As a consequence, they find that $|\Gamma(n)|$ grows exponentially in $n$ and obtain a strong estimate for the asymptotic constant.
However, due to their focus on numerical sets, their approach involves significant complexity.

In this short note, we provide a short and simple proof for the fact that the number of additive 2-bases for $n$ grows exponentially in $n$.

\begin{theorem}
    \label{thmMain}
    There exists some $\alpha > 0$ such that $|\Gamma(n)| \ge \alpha \cdot 2^{n+1}$ holds for all integers $n \ge 0$.
\end{theorem}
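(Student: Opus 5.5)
Equivalently, we show that a uniformly random subset $X \subseteq [n]_0$ (each element included independently with probability $1/2$) is an additive 2-base for $n$ with probability bounded below by a constant $\alpha>0$ independent of $n$. For small $n$ this is trivial, since $[n]_0$ itself is a base, so $|\Gamma(n)|\ge 1$. Hence it suffices to handle all $n \ge n_0$ for some fixed $n_0$, and then shrink $\alpha$ to cover the finitely many $n<n_0$.

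The plan is to condition on a fixed ``boundary'' pattern. Choose a constant $k$ and require that $X$ contains the initial block $\{0,1,\dots,k\}$ and the final block $\{n-k,\dots,n\}$. This event has probability $2^{-2(k+1)}$, a constant. Under this conditioning, every target $m\in[n]_0$ that is close to an endpoint is covered deterministically. If $m\le k$, then $m=m+0$ with both summands in the initial block. If $m\ge 2n-\dots$ (irrelevant), or more usefully $m\ge n-k$, then $m=m+0$ with $m$ in the final block. For a middle target $m$ with $k<m<n-k$, write $m$ as $x+y$ using many disjoint pairs. Consider the pairs $\{j, m-j\}$ for $j$ ranging over values with $j\ne m-j$ and both in the ``random'' region $(k,n-k)$. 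There are roughly $\min(m,n-m)/2$ such disjoint pairs, and $m$ is covered as soon as one pair lies entirely in $X$. Each pair succeeds independently with probability $1/4$, so the failure probability is at most $(3/4)^{c\min(m,n-m)}$ minus boundary corrections.

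The middle targets are not all far from the endpoints, so we also use the conditioned blocks. Pairs of the form $(i, m-i)$ with $i\in\{0,\dots,k\}$ require only $m-i\in X$, which has probability $1/2$ each, independently over $i$. Thus for $m$ just above $k$ we get failure probability $\le 2^{-(k+1)}$. Symmetrically, pairs $(n-k+i, \cdot)$ handle targets near $n$; more directly, $m = (n-i) + (m-n+i)$ is not available, so instead use $m-(n-i)<0$. I will therefore treat targets near $n$ via pairs $\{j,m-j\}$ together with the fact that $m$ itself may lie in the final block. The cleanest approach is to combine both mechanisms. For each middle $m$, the number of independent ``witness'' events, each with probability at least $1/4$, is at least $c\cdot\min(m, n-m) + (k+1)$, roughly. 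A union bound then gives failure probability at most
\[
\sum_{m} (3/4)^{c\min(m,n-m)+k} \le C\,(3/4)^{k},
\]
which is less than $1/2$ once $k$ is a large enough constant. Hence $\prob[X\in\Gamma(n)] \ge 2^{-2(k+1)}\cdot\tfrac12$.

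The main obstacle is the bookkeeping of independence: the witness pairs for a given $m$ must be disjoint, and they must not touch the conditioned blocks in a dependent way. The conditioning is on fixed elements, so it is harmless. The count must also genuinely grow with $\min(m,n-m)$ uniformly, including for targets near $n$, where the sums $j+(m-j)$ need $j$ small, that is, in the initial block. For $m$ near $n$, the elements $m-i$ with $i\le k$ lie near $n$ and possibly inside the final block, which makes them deterministic and helps. Away from both ends, the $\sim m/2$ disjoint random pairs give geometric decay. I expect checking these few regimes carefully to be the only real work; the summation then converges to $O((3/4)^k)$.
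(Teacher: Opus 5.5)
Your proof is correct, and it takes a genuinely different route from the paper's. The paper never conditions on anything. It bounds $\prob[k\notin X+X]\le(3/4)^{k/2-1}$, but small targets fail with constant probability (already $\prob[0\notin X+X]=1/2$), so a union bound over all targets is useless. Instead the paper uses Markov's inequality to find $2^n$ subsets missing at most $20$ targets, and patches them to get the lossy bound $|\Gamma(n)|\ge 2^n/(n+1)^{20}$. It then bootstraps through the exact recurrence $|\Gamma(n+1)|=(1+\delta(n))|\Gamma(n)|$: because the lossy bound is only polynomially off, $1-\delta(n)$ is exponentially small, and $\prod_k(1+\delta(k))/2$ stays bounded away from $0$. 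You remove the same obstruction at its source by paying a constant factor to force $\{0,\dots,k\}\subseteq X$. This covers every $m\le 2k$ deterministically (as $m+0$ or $k+(m-k)$) and gives each larger $m$ the $k+1$ extra independent witnesses $m-i\in X$, so a plain union bound already suffices. Your route is shorter and yields an explicit $\alpha$. The paper's route is organized around the recurrence, which also shows $|\Gamma(n)|/2^{n+1}$ is non-increasing (since $\delta(n)\le 1$) and hence convergent, but its $n_0$, and therefore its $\alpha$, is not explicit.

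Your bookkeeping around targets near $n$ is muddled, though harmlessly. Since $m\le n$, every pair $\{j,m-j\}$ lies in $[0,m]\subseteq[n]_0$, so the usable pairs number about $m/2-k$, not $\min(m,n-m)/2$. Targets near $n$ are in fact the easy ones, $m-i$ never lies in the final block when $m<n-k$, and the final block can be dropped entirely. Also, your literal count ``at least $c\min(m,n-m)+(k+1)$ witnesses'' fails for $m$ just above $2k$; what you actually need is the probability bound. Condition only on $\{0,\dots,k\}\subseteq X$ and take $m\ge 2k+1$. Then the elements $m-i$ ($0\le i\le k$) and the pairs $\{j,m-j\}$ with $k<j<m/2$ are pairwise disjoint and all lie outside $[0,k]$, so
\[
\prob\bigl[m\notin X+X \mid \{0,\dots,k\}\subseteq X\bigr]\le 2^{-(k+1)}\left(\tfrac34\right)^{\lfloor (m-2k-1)/2\rfloor}.
\]
Summing over $m$ gives at most $8\cdot 2^{-(k+1)}=2^{2-k}$. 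Taking $k=4$ yields $|\Gamma(n)|\ge\tfrac{3}{128}\cdot 2^{n+1}$ for all $n\ge 0$, using $|\Gamma(n)|\ge 1$ for $n\le 3$.
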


In what follows, we focus on simplicity rather than optimizing constants, and hence most bounds we obtain are not tight.

\section{Exponential lower bound}

In all subsequent statements, $n$ is assumed to be a non-negative integer.

\begin{lemma}
    \label{lemNotGenerated}
    If $X$ is a uniformly random subset of $[n]_0$ and $k \in [n + 1]_0$, then
    \[
        \prob[k \notin X + X] \le \left( \frac{3}{4} \right)^{\frac{k}{2} - 1}.
    \]
\end{lemma}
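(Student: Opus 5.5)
The plan is to split the ways of writing $k$ as a sum of two elements of $[n]_0$ into disjoint pairs and use independence. Since $k \le n+1$, every pair $\{i, k-i\}$ with $1 \le i < k/2$ has both entries in $[n]_0$: we have $k - i \le k-1 \le n$ and $i \ge 1$. (The pair $\{0,k\}$ is dropped because $k$ may equal $n+1$.) These pairs are pairwise disjoint as sets, so for a uniformly random $X$, the events ``$i \in X$ and $k-i \in X$'' are independent across different $i$. Each such event has probability $1/4$.

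If $k \notin X+X$, then none of these pairs lies entirely in $X$. Hence
\[
    \prob[k \notin X + X] \le \prod_{1 \le i < k/2} \left(1 - \tfrac{1}{4}\right) = \left(\tfrac{3}{4}\right)^{m},
\]
where $m$ is the number of integers $i$ with $1 \le i < k/2$. One could also use the diagonal term $k/2 \in X$ when $k$ is even, or the pair $\{0,k\}$ when $k \le n$, but neither is needed.

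It remains to check that $m \ge k/2 - 1$. Here $m = \lceil k/2 \rceil - 1$, which is at least $k/2 - 1$. The cases $k = 0, 1, 2$ give $m = 0$ and a right-hand side that is at least $1$, so they hold trivially and need no separate treatment. Since $3/4 < 1$, a larger exponent gives a smaller bound, so the inequality follows.

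The only points needing care are the boundary case $k = n+1$, which is why pairs containing $0$ are excluded, and the disjointness of the pairs, which is what justifies independence. No step is a real obstacle.
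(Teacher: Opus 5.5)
Your proof is correct and follows the paper's argument: you use the disjoint pairs $\{i,k-i\}$ for $1 \le i \le \lceil k/2\rceil - 1$, drop the pair $\{0,k\}$ (which need not lie in $[n]_0$ when $k=n+1$) and the diagonal term, and multiply the independent probabilities $3/4$. One small nit: the formula $m = \lceil k/2\rceil - 1$ gives $-1$ at $k=0$ while the true count is $0$, but you already handle that case correctly by stating $m=0$ there.
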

\begin{proof}
    For $k=0,1$, the claimed bound is trivial. Letting $E_i$ denote the event that $\{i,k-i\} \not \subseteq X$, we have for $k \ge 2$ that
    \begin{align*}
        \prob[k \notin X + X]
        = \prob[E_0\wedge E_1 \wedge \dots \wedge E_{\lfloor k/2 \rfloor}]
        & \le \prob[E_1 \wedge E_2 \wedge \dots \wedge E_{\lceil k/2 \rceil - 1}] \\
        & = \prod_{i=1}^{\lceil k/2 \rceil - 1} \prob[E_i],
    \end{align*}
    where the second equality follows from the mutual independence of the events.
    The claim follows from the observation that $\prob[E_i] = \frac{3}{4}$ for each of the above events, whenever $1 \le i \le \lceil k/2 \rceil - 1$ and $k \le n + 1$.
\end{proof}

\begin{lemma}
    \label{lemRandom}
    If $X$ is a uniformly random subset of $[n]_0$, then the expected number of integers in $[n]_0 \setminus (X + X)$ is at most $10$.
\end{lemma}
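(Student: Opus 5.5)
By linearity of expectation, the expected number of integers in $[n]_0 \setminus (X+X)$ equals $\sum_{k=0}^{n} \prob[k \notin X+X]$. I will bound each term by Lemma~\ref{lemNotGenerated}, which applies because every $k \in [n]_0$ lies in $[n+1]_0$. This gives
\[
    \sum_{k=0}^{n} \prob[k \notin X+X] \le \sum_{k=0}^{\infty} \left(\frac{3}{4}\right)^{\frac{k}{2}-1}
    = \frac{4}{3} \sum_{k=0}^{\infty} \left(\sqrt{3/4}\right)^{k}
    = \frac{4}{3}\cdot \frac{1}{1-\sqrt{3}/2}.
\]

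The remaining step is the numerical check. Since $\sqrt{3}/2 \approx 0.866$, we have $1-\sqrt{3}/2 \approx 0.134$, and the sum is about $\frac{4}{3}\cdot 7.46 \approx 9.95 < 10$. To make this rigorous without decimals, I will use the identity $\frac{1}{1-\sqrt3/2} = \frac{2}{2-\sqrt3} = 2(2+\sqrt3)$. The bound then becomes $\frac{8}{3}(2+\sqrt3)$. Because $\sqrt3 < 1.75$ (as $1.75^2 = 3.0625$), this is less than $\frac{8}{3}\cdot 3.75 = 10$.

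There is no real obstacle. The only point needing care is that the constant is tight enough that a crude estimate such as $\sqrt3<2$ would fail; the bound $\sqrt3 < 7/4$ suffices. Alternatively, I could treat the terms $k=0,1$ by the trivial bound $1$ each and apply the geometric bound only for $k\ge 2$, which gives some extra slack.
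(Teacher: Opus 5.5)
Your proof is correct and follows the paper's route: linearity of expectation, Lemma~\ref{lemNotGenerated} applied term by term, and comparison with the infinite geometric series $\frac{4}{3}\cdot\frac{1}{1-\sqrt{3/4}}$. Your rationalization to $\frac{8}{3}(2+\sqrt3) < \frac{8}{3}\cdot\frac{15}{4} = 10$, using $\sqrt3 < 7/4$, rigorously verifies the final numerical inequality, which the paper only asserts.
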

\begin{proof}
    For $k \in [n]_0$, let $Y_k$ be the indicator random variable, that is $1$ if $k \notin X + X$, and $0$ otherwise.
    Note that $|[n]_0 \setminus (X + X)| = \sum_{k=0}^n Y_k$ and hence using Lemma~\ref{lemNotGenerated} we obtain
    \begin{multline*}
        \ev[|[n]_0 \setminus (X + X)|]
            = \sum_{k=0}^n \ev[Y_k]
            = \sum_{k=0}^n \prob[k \notin X + X]
            \le \sum_{k=0}^n \left( \frac{3}{4} \right)^{\frac{k}{2} - 1} \\
            = \frac{4}{3} \cdot \frac{1 - \left( \sqrt{3/4} \right)^{n+1}}{1 - \sqrt{3/4}}
            \le \frac{4}{3} \cdot \frac{1}{1 - \sqrt{3/4}}
            \le 10.
    \end{multline*}
\end{proof}

\begin{lemma}
    \label{lemFirstLowerBound}
    We have $|\Gamma(n)| \ge 2^n / (n+1)^{20}$.
\end{lemma}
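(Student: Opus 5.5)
Since Lemma~\ref{lemRandom} gives $\ev[|[n]_0 \setminus (X+X)|] \le 10$, Markov's inequality shows that with probability at least $1/2$ a uniformly random $X \subseteq [n]_0$ misses at most $20$ elements of $[n]_0$ in $X+X$. So at least $2^{n+1}/2 = 2^n$ subsets $X$ lie in the family
\[
    \mathcal{F} := \{ X \subseteq [n]_0 : |[n]_0 \setminus (X+X)| \le 20 \}.
\]
The plan is to map each $X \in \mathcal{F}$ to an additive 2-base by adding the missing elements, and to show that this map has fibres of size at most $(n+1)^{20}$. Together with $|\mathcal{F}| \ge 2^n$, this gives the claim.

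\textbf{The map.} For $X \in \mathcal{F}$, let $M(X) := [n]_0 \setminus (X+X)$ and set $\phi(X) := X \cup M(X)$. This is an additive 2-base: every $k \in [n]_0$ either already lies in $X+X \subseteq \phi(X)+\phi(X)$, or lies in $M(X)$. In the latter case we need $k = a + b$ with $a, b \in \phi(X)$. The trick is to also add $0$. Define instead
\[
    \phi(X) := X \cup M(X) \cup \{0\}.
\]
Then $k = k + 0$ with $k, 0 \in \phi(X)$, so $\phi(X) \in \Gamma(n)$. The set $\phi(X)$ differs from $X$ by at most $21$ added elements.

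\textbf{Counting preimages.} Fix $B \in \Gamma(n)$. Any $X$ with $\phi(X) = B$ is obtained from $B$ by deleting a subset $S \subseteq B$ with $|S| \le 21$, namely $S \subseteq (M(X) \cup \{0\}) \setminus X$. The number of such $S$ is at most $\sum_{j \le 21} \binom{n+1}{j}$. This count has exponent $21$, not $20$, so some care is needed to get the stated $(n+1)^{20}$.

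\textbf{Main obstacle.} The delicate point is the constant. There are two natural fixes:
\begin{itemize}
    \item Note that $0 \in X+X$ iff $0 \in X$, so $0 \in M(X)$ whenever $0 \notin X$. Hence adding $0$ costs nothing extra: the added set is $M(X) \cup \{0\}$, which has size at most $\max(|M(X)|, 1) \le 20$ when $0 \notin X$. If $0 \in X$, the added set is $M(X)$ itself, again of size at most $20$.
    \item So in both cases $|S| \le 20$, and the number of preimages is at most $\sum_{j=0}^{20} \binom{n+1}{j}$.
\end{itemize}

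For $n \ge 1$, the bound $\sum_{j=0}^{20}\binom{n+1}{j} \le (n+1)^{20}$ follows from a routine estimate. One route is the injection of subsets of size at most $20$ into $20$-tuples from $[n]_0$ by padding with repeats; the empty set needs a small adjustment, which can be absorbed since all quantities are crude. The case $n = 0$ is immediate, since $|\Gamma(0)| = 1 \ge 1$. With the preimage bound in hand,
\[
    2^n \le |\mathcal{F}| \le |\Gamma(n)| \cdot (n+1)^{20},
\]
which is the claim.
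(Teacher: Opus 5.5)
Your proposal is correct and follows essentially the paper's proof: Markov's inequality applied to Lemma~\ref{lemRandom} gives at least $2^n$ sets missing at most $20$ elements; adding the missing elements turns each into a 2-base; and each 2-base arises from at most $(n+1)^{20}$ such sets. You are in fact more careful than the paper in two places: you make explicit that $0 \in X \cup M(X)$ holds automatically, which is why the completed set is a 2-base, and you treat $n=0$ separately, where the fibre bound $(n+1)^{20}=1$ genuinely fails because both subsets of $[0]_0$ complete to $\{0\}$.
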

\begin{proof}
    Let $\Gamma'(n)$ be the family of subsets $X \subseteq [n]_0$ such that $X$ fails to generate at most $20$ integers in $[n]_0$, i.e., $|[n]_0 \setminus (X + X)| \le 20$.
    To estimate the size of $\Gamma'(n)$, let $X$ be a uniformly random subset of $[n]_0$ and $Y = |[n]_0 \setminus (X + X)|$.
    By Lemma~\ref{lemRandom} and Markov's inequality, we have
    \[
        \prob[Y > 20] \le \frac{\ev[Y]}{20} \le \frac{1}{2}.
    \]
    Therefore,
    \begin{equation}
        \label{eqBoundGammaPrime1}
        |\Gamma'(n)| = (1 - \prob[Y > 20]) \cdot 2^{n+1} \ge 2^n.
    \end{equation}
    Now, to every set $X \in \Gamma'(n)$ add the (at most $20$) elements from $[n]_0$ that are not in $X + X$.
    The resulting set is in $\Gamma(n)$.
    Moreover, notice that a single set in $\Gamma(n)$ can be obtained in this way from at most $(n+1)^{20}$ sets in $\Gamma'(n)$, and hence we have
    \begin{equation}
        \label{eqBoundGammaPrime2}
        |\Gamma'(n)| \le (n+1)^{20} \cdot |\Gamma(n)|.
    \end{equation}
    The claim follows by combining inequalities~\eqref{eqBoundGammaPrime1} and~\eqref{eqBoundGammaPrime2}.
\end{proof}

\begin{lemma}
    \label{lemRecurrence}
    For every integer $n \ge 0$, we have
    \[
        |\Gamma(n+1)| = |\Gamma(n)| + |\{X\in \Gamma(n) : n + 1 \in X+X\}|.
    \]
\end{lemma}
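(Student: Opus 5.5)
We will prove the identity by classifying every $Z \in \Gamma(n+1)$ according to whether it contains the new element $n+1$, and then matching each class with a part of the right-hand side.

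\emph{Sets not containing $n+1$.} Suppose $Z \in \Gamma(n+1)$ and $n+1 \notin Z$. Then $Z \subseteq [n]_0$, and $[n]_0 \subseteq [n+1]_0 \subseteq Z+Z$, so $Z \in \Gamma(n)$. Moreover $n+1 \in Z+Z$. Conversely, if $X \in \Gamma(n)$ and $n+1 \in X+X$, then $[n+1]_0 \subseteq X+X$, so $X \in \Gamma(n+1)$. Hence the sets in $\Gamma(n+1)$ avoiding $n+1$ are exactly
\[
\{X \in \Gamma(n) : n+1 \in X+X\}.
\]

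\emph{Sets containing $n+1$.} Suppose $Z \in \Gamma(n+1)$ and $n+1 \in Z$, and put $X := Z \setminus \{n+1\} \subseteq [n]_0$. We claim $X \in \Gamma(n)$. Take $k \in [n]_0$; then $k \in Z+Z$, so $k = a+b$ with $a,b \in Z$. Since $a,b \ge 0$ we have $a,b \le k \le n$, so neither summand equals $n+1$, and hence $a,b \in X$.

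For the converse, take any $X \in \Gamma(n)$ and set $Z := X \cup \{n+1\}$. Then $Z \subseteq [n+1]_0$ and $[n]_0 \subseteq X+X \subseteq Z+Z$. Also $n+1 = 0 + (n+1) \in Z+Z$, because $0 \in X$: indeed $0 \in X+X$ forces $0 = a+b$ with $a,b \ge 0$, so $0 \in X$. Thus $Z \in \Gamma(n+1)$.

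These two maps are mutually inverse, so the sets in $\Gamma(n+1)$ containing $n+1$ are in bijection with $\Gamma(n)$.

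\emph{Conclusion.} The two classes partition $\Gamma(n+1)$, so adding the two counts gives the stated identity.

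The only step that needs care is the converse direction in the second case, namely that $n+1 \in Z+Z$. This is exactly where the observation $0 \in X$ for every additive 2-base $X$ is used.
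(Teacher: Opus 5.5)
Your proof is correct and follows the paper's argument: the paper's proof consists of the same two set identities, namely $\{Z \in \Gamma(n+1) : n+1 \in Z\} = \{X \cup \{n+1\} : X \in \Gamma(n)\}$ and $\{Z \in \Gamma(n+1) : n+1 \notin Z\} = \{X \in \Gamma(n) : n+1 \in X+X\}$. The paper states these as observations, while you verify them in full, including the fact $0 \in X$ for every additive 2-base, which gives $n+1 \in Z+Z$, and the injectivity of $X \mapsto X \cup \{n+1\}$.
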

\begin{proof}
    The claim follows from the observations that
    \[
        \{ X \in \Gamma(n+1) : n + 1 \in X \}
        = \{ X \cup \{n + 1\} : X \in \Gamma(n) \}
    \]
    and
    \[
        \{ X \in \Gamma(n+1) : n + 1 \notin X \}
        = \{ X \in \Gamma(n) : n + 1 \in X + X\}. \qedhere
    \]
\end{proof}

\begin{proof}[Proof of Theorem~\ref{thmMain}]
    Consider the ratio
    \[
        \delta(n) := \frac{|\{X \in \Gamma(n) : n + 1 \in X + X\}|}{|\Gamma(n)|}.
    \]
    By Lemma~\ref{lemRecurrence}, we have
    \begin{equation}
        \label{eqRecurrenceDelta}
        |\Gamma(n+1)| = (1 + \delta(n)) \cdot |\Gamma(n)|.
    \end{equation}
    By Lemma~\ref{lemNotGenerated} and Lemma~\ref{lemFirstLowerBound}, we have
    \begin{align*}
        1 - \delta(n)
        = \frac{|\{X \in \Gamma(n) : n + 1 \notin X + X\}|}{|\Gamma(n)|}
        & \le \frac{|\{X \subseteq [n]_0 : n + 1 \notin X + X\}|}{|\Gamma(n)|} \\
        & \le 2 \cdot \left( \frac{3}{4} \right)^{\frac{n-1}{2}} \cdot (n+1)^{20},
    \end{align*}
    and hence
    \begin{equation}
        \label{eqDeltaBound}
        1 + \delta(n) \ge 2 - \underbrace{\tfrac{4}{\sqrt{3}} \left( \tfrac{\sqrt{3}}{2} \right)^n \cdot (n+1)^{20}}_{=: t(n)}.
    \end{equation}
    Note that
    \[
        \frac{t(n+1)}{t(n)} = \frac{\sqrt{3}}{2} \cdot \left( 1 + \frac{1}{n+1} \right)^{20},
    \]
    and hence there exists some integer $n_0$ such that
    \begin{equation}
        \label{eqLe1}
        t(n) \le \frac{1}{10}
    \end{equation}
    and
    \begin{equation}
        \label{eqGeometricDecay}
        t(n + 1) \le \frac{9}{10} t(n)
    \end{equation}
    holds for all $n \ge n_0$.
    We conclude
    \begin{align*}
        \frac{|\Gamma(n)|}{2^n}
        = \frac{|\Gamma(n_0)|}{2^{n}} \cdot \prod_{k=n_0}^{n - 1} (1 + \delta(k))
        & \ge \frac{|\Gamma(n_0)|}{2^{n_0}} \cdot \prod_{k=n_0}^{n - 1} \left( 1 - \frac{t(k)}{2} \right) \\
        & \ge \frac{|\Gamma(n_0)|}{2^{n_0}} \cdot \left( 1 - \frac{1}{2} \sum_{k=n_0}^{n - 1} t(k) \right) \\
        & \ge \frac{|\Gamma(n_0)|}{2^{n_0}} \cdot \left( 1 - \frac{t(n_0)}{2} \sum_{k=0}^{n - n_0 - 1} \left( \frac{9}{10}\right)^k \right) \\
        & \ge \frac{|\Gamma(n_0)|}{2^{n_0}} \cdot \left( 1 - 5 \cdot t(n_0) \right) \\
        & \ge \frac{|\Gamma(n_0)|}{2^{n_0 + 1}},
    \end{align*}    
    where the first equality follows from \eqref{eqRecurrenceDelta}, the first inequality from~\eqref{eqDeltaBound} and~\eqref{eqLe1}, the third inequality from~\eqref{eqGeometricDecay}, and the last inequality from~\eqref{eqLe1}.
\end{proof}

\bibliographystyle{plain}          
\bibliography{references}   

\end{document}